\begin{document}

\newtheorem{theorem}{Theorem}
\newtheorem{lemma}{Lemma}
\newtheorem{corollary}{Corollary}
\newtheorem{proposition}{Proposition}
\newtheorem*{Ithm}{Ibragimov's Theorem}

\theoremstyle{definition}
\newtheorem{definition}{Definition}
\newtheorem{example}{Example}
\newtheorem{remark}{Remark}

\newenvironment{prof}[1][Proof]{\noindent\textit{#1}\quad }

\newcommand{\ind}{\mathop{\mathrm{index}}}
\parindent2em

\title{THREE STRONGLY HYPERBOLIC METRICS ON PTOLEMY SPACES}

\author{Yingqing Xiao}
\address{Yingqing Xiao, College of Mathematics and Econometrics, Hunan University, Changsha, 410082, China}
\email{ouxyq@hnu.edu.cn}

\author{Zhanqi Zhang}
\address{Zhanqi Zhang (Corresponding author), College of Mathematics and Econometrics, Hunan University, Changsha, 410082, P. R. China}
\email{rateriver@sina.com}
\maketitle

\textbf{Abstract:} Recently, strongly hyperbolic space as certain analytic enhancements of Gromov hyperbolic space was introduced by B. Nica and J. \v{S}pakula. In this note, we prove that the $\log$-metric $\log(1+d)$ on a Ptolemy space $(X,d)$ is a strongly hyperbolic metric. Using our result, we construct three metrics on a Ptolemy metric space and prove they are strongly hyperbolic.

\textbf{Key Words:} Ptolemy space,  strongly hyperbolic space, Gromov hyperbolicity.

\textbf{2010 Mathematics subject classification:} Primary 30F45; Secondary 51F99, 30C99.

\section{Introduction}
In the field of geometric function theory, the hyperbolic metric plays an important role. In higher dimensional Euclidean spaces, the hyperbolic metric exists only in balls and half-spaces and the lack of hyperbolic metric in general domains has been a primary motivation for introducing the so-called hyperbolic-type metrics  in the sense of Gromov. For example, $\widetilde{j}$-metric, Apollonian metric, Seittenranta's metric, half apollonian metric, scale-invariant Cassinian metric and M$\mathrm{\ddot{o}}$bius-invariant Cassinian metric (see [\cite{Beardon1,Hasto1,Hasto2,Hasto3,Ibragimov1, Ibragimov3, Ibragimov4,Seittenranta,Vuorinen}] and the references therein).  All these metrics are defined in terms of distance functions and can be classified into one point metrics or two-point metrics based on the number of boundary points used in their definitions. Recently, in the paper \cite{AZW}, the authors proposed an approach to construct a metric from the one-point metrics. More precisely, let $(X,d)$ be an arbitrary metric space. For each $p\in X$, they defined a distance function $\tau_p$ on $X \setminus\{p\}$, by
$$
\tau_p(x,y)=\log(1+2\frac{d(x,y)}{\sqrt{d(p,x)}\sqrt{d(p,y)}})
$$
and proved that for each $p\in X$, the distance function $\tau_p$  is Gromov hyperbolic with $\delta=\log3 +\log2$.  In fact, the following more general distance function was first introduced by O. Dovgoshey, P. Hariri and M. Vuorinen in \cite{Vuorinen2}.
$$
h_{D,c}(x,y)=\log(1+c\frac{d(x,y)}{\sqrt{d_D(x)d_D(y)}})
$$
where $D$ is a nonempty open set in a metric space $(X, d) $ and $d_D(x)=\mathrm{dist}(x, \partial D) , c\geq 2$. They shown that $h_{D,c}$ is a metric and $2$ is the best possible.

Although hyperbolicity yields a very satisfactory theory, for certain analytic purposes, hyperbolicity by itself is not enough, and one needs certain enhancements. In the paper \cite{NJ},  the authors introduced the notion of strongly hyperbolic space and given certain enhancements. They shown that strongly hyperbolic spaces are Gromov hyperbolic spaces that are metrically well-behaved at infinity, and under weak geodesic assumptions, the strongly space are strongly bolic as well. They shown that $\mathrm{CAT}(-1)$ spaces are strongly hyperbolic and also shown that the Green metric defined by a random walk on a hyperbolic group is strongly hyperbolic. Since the strongly hyperbolic space has better properties, it is interesting  to determine which hyperbolic metric in geometric function theory is a strongly hyperbolic metric or to construct a strongly hyperbolic metric on a given metric space. We consider this problem in Ptolemy spaces in this paper.

Firstly, we show that the  $\log$-metric of a Ptolemy space is a strongly hyperbolic metric. That is, we show that if $(X,d)$ is a Ptolemy space, then $(X,\log(1+d))$ is a strongly hyperbolic space. Using our result, we can show that the metric space $(X, S_p)$ is also a strongly hyperbolic space. Here $$S_p(x,y)=\log(1+
 \frac{d(x,y)}{[1+d(x,p)][1+d(y,p)]})$$
 for a fix point $p\in X$ and $x,y\in X$.

Secondly, motivated by the recent works of A. G. Aksov, Z. Ibragimov and W. Whiting in \cite{AZW},  we construct a strongly hyperbolic metric on a Ptolemy metric space.  To formulate the results of our paper, for each $p\in X$, we define a distance function $\chi_p$ on $X \setminus\{p\}$, by
$$
\chi_p(x,y)=\log(1+\frac{d(x,y)}{d(p,x)d(p,y)}).
$$
 We prove that if $(X,d)$ is a Ptolemy space,  for each $p\in X$, the distance function $\chi_p$  is a strongly hyperbolic metric.
We also consider the distortion of the above metric $\chi_p$ under M\"{o}bius maps of a punctured ball in $\mathbb{R}^n$.

\section{Strongly hyperbolic metrics on Ptolemy spaces}
We begin by recalling some basic notions and facts. Let $X$ be a metric space, fix a base point $o\in X$, the Gromov product of $x, x'\in X$ with respect to $o$ is defined as
$$
(x|x')_o:=\frac{1}{2}(|ox|+|ox'|-|xx'|).
$$
Note that $(x|x')_o\geq 0$ by the triangle inequality.
\begin{definition}[Gromov]\label{def-1}
A metric spaces $X$ is \emph{$\delta$-hyperbolic}, where $\delta \geq 0$, if
$$
(x|y)_o\geq \min\{(x|z)_o,(z|y)_o\}-\delta
$$
for all $x, y, z,o\in X$.
\end{definition}
In the paper \cite{NJ}, the authors given the following enhancements of hyperbolicity.
\begin{definition}[\cite{NJ}, Definition 4.1]We say that a metric space is \emph{strongly hyperbolic} with parameter $\epsilon>0$ if
$$
\exp(-\epsilon(x|y)_o)\leq \exp(-\epsilon(x|z)_o)+\exp(-\epsilon(z|y)_o)
$$
for all $x, y, z, o\in X$; equivalently, the four-point condition
$$
\exp(\frac{\epsilon}{2}(|xy|+|zt|))\leq \exp(\frac{\epsilon}{2}(|xz|+|yt|))+\exp(\frac{\epsilon}{2}(|xt|+|zy|))
$$
holds for all $x, y, z, t\in X$.
\end{definition}
The authors mentioned the motivation for considering this notion of strongly hyperbolic is the following theorem in the paper \cite{NJ},
\begin{theorem}[\cite{NJ}, Theorem 4.2] \label{NJThereom}
Let $X$ be a strongly hyperbolic space with parameter $\epsilon$. Then X is an $\epsilon$-good, $\log2/\epsilon$-hyperbolic space. Furthermore, $X$ is strongly bolic provided that $X$ is roughly geodesic.
\end{theorem}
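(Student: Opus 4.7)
Since the statement is Theorem 4.2 of \cite{NJ} quoted verbatim, my plan is to reproduce the argument from that paper. I break the conclusion into three claims and treat each separately.

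\emph{The $\log 2/\epsilon$-hyperbolicity.} This is the easy part and I would do it first. Starting from the defining inequality
$$\exp(-\epsilon(x|y)_o)\leq \exp(-\epsilon(x|z)_o)+\exp(-\epsilon(z|y)_o),$$
I would bound the right-hand side crudely by $2\max\{\exp(-\epsilon(x|z)_o),\exp(-\epsilon(z|y)_o)\}$, and rewrite this as $2\exp(-\epsilon\min\{(x|z)_o,(z|y)_o\})$. Taking logarithms and dividing by $-\epsilon$ gives $(x|y)_o\geq \min\{(x|z)_o,(z|y)_o\}-\log 2/\epsilon$, which is exactly Definition \ref{def-1} with $\delta=\log 2/\epsilon$.

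\emph{The $\epsilon$-good part.} In \cite{NJ} the notion of $\epsilon$-goodness asks, roughly, that the quantity $\rho_\epsilon(x,y):=\exp(-\epsilon(x|y)_o)$ be a genuine metric after being extended to the Gromov boundary, rather than merely a quasi-metric satisfying a Frink-type inequality. Positivity and symmetry of $\rho_\epsilon$ are automatic from the definition of the Gromov product; the triangle inequality at finite points is precisely the strongly hyperbolic three-point condition. My plan would then be to extend $\rho_\epsilon$ to $X\cup\partial_\infty X$ by taking limits along sequences converging to boundary points, and to verify that the strongly hyperbolic inequality is stable under such limits, yielding the triangle inequality on the extended space.

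\emph{Strong bolicity under the rough geodesic hypothesis.} This is where I expect the main technical difficulty. Strong bolicity in Lafforgue's sense is a quantitative refinement of the four-point condition formulated with respect to coarse midpoints. My approach would be to start from the four-point formulation of strong hyperbolicity,
$$\exp(\tfrac{\epsilon}{2}(|xy|+|zt|))\leq \exp(\tfrac{\epsilon}{2}(|xz|+|yt|))+\exp(\tfrac{\epsilon}{2}(|xt|+|zy|)),$$
and to substitute rough midpoints (supplied by the rough geodesic hypothesis) in place of exact midpoints, letting the exponential absorb the bounded discrepancies. The hard part will be tracking the rough-geodesy constants through this substitution and showing that they depend only on the strong hyperbolicity parameter $\epsilon$ and the rough geodesy constant, and not on the four chosen points. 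The first two claims are essentially bookkeeping, so the bulk of the work in my plan goes into this last step.
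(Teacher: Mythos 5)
The paper does not actually prove this statement: it is quoted verbatim from Nica and \v{S}pakula [NJ, Theorem 4.2] and used as a black box, so there is no in-paper argument to compare yours against. Judging your plan on its own terms: the first step is correct and complete. Bounding the right-hand side of $\exp(-\epsilon(x|y)_o)\leq \exp(-\epsilon(x|z)_o)+\exp(-\epsilon(z|y)_o)$ by twice the maximum and taking logarithms is exactly how one obtains $\delta=\log 2/\epsilon$, and nothing more is needed there.

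The other two steps, however, each miss the point where the real work lies. For $\epsilon$-goodness, the issue is not whether ``the strongly hyperbolic inequality is stable under limits'' but whether the limits exist at all: in a general $\delta$-hyperbolic space the Gromov product of two boundary points is only well defined up to an additive error of order $\delta$, and the content of goodness is precisely that strong hyperbolicity forces $(x_n|y_n)_o$ to converge (possibly to $\infty$) for any sequences $x_n\to x$, $y_n\to y$ in the bordification; one shows $\exp(-\epsilon(x_n|y_n)_o)$ is Cauchy by a direct application of the three-point inequality. Your plan takes the existence of these limits for granted and only checks the triangle inequality afterwards. For strong bolicity, Lafforgue's definition has two clauses: an approximate-midpoint clause, which the rough geodesic hypothesis supplies directly and which never interacts with the four-point inequality, and the $\eta$-$r$-$R$ clause recorded in the paper immediately after the theorem. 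The latter follows from $\exp(\frac{\epsilon}{2}(|xt|+|zy|))\leq \exp(\frac{\epsilon}{2}(|xz|+|yt|))+\exp(\frac{\epsilon}{2}(|xy|+|zt|))$ by dividing through by $\exp(\frac{\epsilon}{2}(|xz|+|yt|))$ and using $|xy|+|zt|\leq r$, $|xz|+|yt|\geq R$: the resulting correction is $\frac{2}{\epsilon}\log(1+e^{\epsilon(r-R)/2})$, which is smaller than $\eta$ once $R$ is large. No midpoints enter this computation, so your plan to ``substitute rough midpoints into the four-point inequality'' is aimed at the wrong target and would not produce the required estimate.
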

Strongly bolic metric spaces was considered by V. Lafforgue in
\cite{Lafforgue} in relation with conjecture of Baum-Connes. Here for hyperbolic spaces $(X,d)$ which are roughly geodesic, strong bolicity in the sense of Lafforgue \cite{Lafforgue} amounts to the following:
for every $\eta,r>0$,  there exists $R>0$  such that $d(x,y)+d(z,t)\leq r$ and $d(x,z)+d(y,t)\geq R$   imply that
 $d(x,t)+d(y,z)\leq d(x,z)+d(y,t)+\eta$.

From the above theorem \ref{NJThereom}, we know that the strongly hyperbolic space has better properties than general hyperbolic spaces.
Thus it is interesting to construct a strongly hyperbolic metric on a metric space.
%
%

\begin{definition} A metric space $(X,d)$ is called \emph{Ptolemy space} if the following Ptolemy inequality
$$
d(x_1,x_2)d(x_3,x_4)\leq d(x_1,x_4)d(x_2,x_3)+d(x_1,x_3)d(x_2,x_4)
$$
holds for all quadruples $x_1,x_2,x_3,x_4\in X$.
\end{definition}

\begin{lemma}\label{lemma-2.1}
Suppose $(X,d)$ is a metric space and $x_i\in X$ for $i=1,2,3,4$. Then
$$
d(x_1,x_2)+d(x_3,x_4)\leq d(x_1,x_3)+d(x_1,x_4)+d(x_2,x_3)+d(x_2,x_4).
$$
\end{lemma}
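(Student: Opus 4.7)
\begin{prof}[Proof sketch]
The plan is to obtain the inequality by averaging four applications of the ordinary triangle inequality. Observe first that a single pair of triangle inequalities is not enough: any choice of intermediate vertex forces one of the four distances on the right-hand side to appear with coefficient $2$, and the asymmetry cannot be removed without introducing extra terms. Averaging over the two possible routings for each of the two left-hand distances resolves this.

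Concretely, I would write down the four triangle inequalities
\begin{align*}
d(x_1,x_2) &\le d(x_1,x_3)+d(x_2,x_3),\\
d(x_1,x_2) &\le d(x_1,x_4)+d(x_2,x_4),\\
d(x_3,x_4) &\le d(x_1,x_3)+d(x_1,x_4),\\
d(x_3,x_4) &\le d(x_2,x_3)+d(x_2,x_4),
\end{align*}
routing $d(x_1,x_2)$ through $x_3$ and through $x_4$, and dually routing $d(x_3,x_4)$ through $x_1$ and through $x_2$. Summing the four inequalities, each of $d(x_1,x_3)$, $d(x_1,x_4)$, $d(x_2,x_3)$, $d(x_2,x_4)$ appears on the right with coefficient exactly $2$, while the left becomes $2d(x_1,x_2)+2d(x_3,x_4)$. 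Dividing by $2$ yields the claimed inequality.

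There is no genuine obstacle here; the content of the argument is the combinatorial observation that the four ``triangle routes'' between the two diagonals $\{x_1 x_2\}$ and $\{x_3 x_4\}$ use each of the four ``cross'' edges exactly twice. I would probably just present the four inequalities, the sum, and the division by $2$ in three short lines.
\end{prof}
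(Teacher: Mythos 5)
Your proof is correct and is essentially identical to the paper's: the same four triangle inequalities (routing $d(x_1,x_2)$ through $x_3$ and $x_4$, and $d(x_3,x_4)$ through $x_1$ and $x_2$), summed and divided by $2$. The only difference is that you make the division by $2$ explicit, which the paper leaves implicit.
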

\begin{prof}
By the triangle inequality, we have
\begin{align}\label{ine-2.1.2}   \nonumber
d(x_1,x_2)\leq d(x_1,x_3)+d(x_3,x_2),\\  \nonumber
d(x_1,x_2)\leq d(x_1,x_4)+d(x_4,x_2),\\   \nonumber
d(x_3,x_4)\leq d(x_3,x_1)+d(x_1,x_4),\\   \nonumber
d(x_3,x_4)\leq d(x_3,x_2)+d(x_2,x_4).      \nonumber
\end{align}
We sum the above four inequalities and obtain that
$$
d(x_1,x_2)+d(x_3,x_4)\leq d(x_1,x_3)+d(x_1,x_4)+d(x_2,x_3)+d(x_2,x_4).
$$\qed
\end{prof}

\begin{theorem}\label{keytheorem}
Suppose that $(X,d)$ is a Ptolemy space, then the metric space $(X,\log(1+d))$ is a strongly hyperbolic space with parameter $2$.
\end{theorem}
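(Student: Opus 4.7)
The plan is to verify the equivalent four-point formulation of strongly hyperbolic with parameter $\epsilon = 2$ directly. Writing $|\cdot|'$ for $\log(1+d(\cdot,\cdot))$, the condition to check for all $x,y,z,t \in X$ is
$$
\exp\bigl(|xy|' + |zt|'\bigr) \leq \exp\bigl(|xz|' + |yt|'\bigr) + \exp\bigl(|xt|' + |zy|'\bigr),
$$
which, since $\exp(|ab|') = 1 + d(a,b)$, amounts to the purely algebraic inequality
$$
(1+d(x,y))(1+d(z,t)) \leq (1+d(x,z))(1+d(y,t)) + (1+d(x,t))(1+d(z,y)).
$$

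My plan is to expand both sides and split the resulting inequality into two pieces. After expansion, the left side equals $1 + d(x,y) + d(z,t) + d(x,y)d(z,t)$, while the right side equals
$$
2 + d(x,z) + d(y,t) + d(x,t) + d(z,y) + d(x,z)d(y,t) + d(x,t)d(z,y).
$$
The product terms are handled by the Ptolemy inequality, which gives exactly
$$
d(x,y)d(z,t) \leq d(x,z)d(y,t) + d(x,t)d(z,y).
$$
The linear terms are handled by Lemma~\ref{lemma-2.1} applied with $x_1=x,\,x_2=y,\,x_3=z,\,x_4=t$, yielding
$$
d(x,y) + d(z,t) \leq d(x,z) + d(x,t) + d(y,z) + d(y,t).
$$
Adding these two inequalities and observing that the constant $1$ on the left is strictly less than the constant $2$ on the right closes the estimate.

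Thus the proof reduces to one exponential manipulation followed by exactly the two ingredients the excerpt has lined up: the Ptolemy inequality for the multiplicative cross-term, and Lemma~\ref{lemma-2.1} for the additive terms. There is no real obstacle; the only mildly delicate step is recognizing that the natural choice $\epsilon=2$ is precisely the one that turns the four-point exponential inequality into the product form $(1+d)(1+d) \leq (1+d)(1+d) + (1+d)(1+d)$, so that Ptolemy and the triangle-sum lemma can be combined cleanly. A smaller $\epsilon$ would require fractional powers of $1+d$ and would not align with Ptolemy, while a larger $\epsilon$ would demand a strengthened Ptolemy-type estimate that need not hold.
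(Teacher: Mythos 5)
Your proposal is correct and follows essentially the same route as the paper's own proof: with $\epsilon=2$ the four-point condition becomes $(1+d_{12})(1+d_{34})\leq(1+d_{13})(1+d_{24})+(1+d_{14})(1+d_{23})$, and after expansion the product terms are handled by the Ptolemy inequality while the linear terms are handled by Lemma~\ref{lemma-2.1}, exactly as in the paper. No gaps.
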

\begin{prof}
Let $x_1,x_2,x_3,x_4\in X$, we introduce the following notations for convenience.
$ \rho_{ij}=\log(1+d(x_i,x_j))$, $d_{ij}=d(x_i,x_j)$ for all $i,j\in \{1,2,3,4\}$. Thus
$$
\rho_{ij}=\log(1+d_{ij}).
$$

Now, we need to show that
$$
e^{(\rho_{12}+\rho_{34})}\leq e^{(\rho_{13}+\rho_{24})}+e^{(\rho_{14}+\rho_{23})},
$$
which is equivalent to the following inequality,
$$
(1+d_{12})(1+d_{34})\leq (1+d_{13})(1+d_{24})+(1+d_{14})(1+d_{23}).
$$
Notice that $(X,d)$ is a Ptolemy space, by Lemma \ref{lemma-2.1}, we have
\begin{equation*}
\begin{split}
(1+d_{12})(1+d_{34})&\;=1+d_{12}+d_{34}+d_{12}d_{34}\\
&\;\leq 2+d_{13}+d_{24}+d_{14}+d_{23}+d_{14}d_{23}+d_{13}d_{24}\\
&\;=(1+d_{13})(1+d_{24})+(1+d_{14})(1+d_{23}).\\
\end{split}
\end{equation*}
Thus, we show that the metric space $(X, \log(1+d))$ is a strongly hyperbolic hyperbolic space with parameter $\epsilon=2$.
\qed
\end{prof}

Let $(X,d)$ be any metric space, fix a base point $p\in X$, and the following distance function $s_p$ was considered in the paper \cite{BHX},
 $$
 s_p(x,y)= \frac{d(x,y)}{[1+d(x,p)][1+d(y,p)]}
 $$
for $x,y\in X$.  Sometimes this is a distance function, but in general it may not satisfy the triangle inequality. In this paper, we have the following result.
\begin{theorem}Suppose $(X,d)$ is a Ptolemy space and $p\in X$. Then $(X, s_p)$ is also a Ptolemy space.
\end{theorem}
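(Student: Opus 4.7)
The plan is to verify the two requirements separately: first that $s_p$ is a metric on $X$, and second that it satisfies the Ptolemy inequality. For bookkeeping I would write $a_i = 1 + d(x_i, p)$, so that $s_p(x_i, x_j) = d(x_i, x_j)/(a_i a_j)$.

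The Ptolemy inequality for $s_p$ is the easy half. Each of the three products $s_p(x_i,x_j)\,s_p(x_k,x_l)$ appearing in it has the same denominator $a_1 a_2 a_3 a_4$, since $\{i,j,k,l\} = \{1,2,3,4\}$. Clearing that common denominator therefore reduces the Ptolemy inequality for $s_p$ to the Ptolemy inequality for $d$, which holds by hypothesis. This step is essentially immediate.

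What I expect to need slightly more care is checking that $s_p$ is even a metric, since the paragraph preceding the theorem explicitly warns that $s_p$ need not satisfy the triangle inequality in general. Non-negativity, symmetry, and non-degeneracy are inherited from $d$. For the triangle inequality $s_p(x,y) \leq s_p(x,z) + s_p(z,y)$, I would clear the common denominator $(1+d(x,p))(1+d(y,p))(1+d(z,p))$, reducing it to
$$d(x,y)(1+d(z,p)) \leq d(x,z)(1+d(y,p)) + d(z,y)(1+d(x,p)).$$
Separating constant terms from $d(\cdot,p)$-weighted terms splits this into two pieces: the ordinary triangle inequality for $d$, and $d(x,y)\,d(z,p) \leq d(x,z)\,d(y,p) + d(x,p)\,d(z,y)$. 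The latter is exactly Ptolemy applied to the quadruple $(x,y,z,p)$, so both pieces follow from the hypotheses on $(X,d)$.

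The main obstacle is really just this split in the triangle-inequality step; once one sees that it reduces to an ordinary triangle inequality plus a Ptolemy inequality, the theorem follows. I would present the metric verification first and then give the one-line reduction for the Ptolemy inequality itself.
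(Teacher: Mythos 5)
Your proposal is correct and follows essentially the same route as the paper: clear denominators to reduce the Ptolemy inequality for $s_p$ to that for $d$, and reduce the triangle inequality for $s_p$ to $d(x,y)[1+d(z,p)]\leq d(x,z)[1+d(y,p)]+d(y,z)[1+d(x,p)]$. You are in fact slightly more explicit than the paper, which merely asserts this last inequality ``holds naturally'' for a Ptolemy space, whereas you correctly note it decomposes into the ordinary triangle inequality for $d$ plus the Ptolemy inequality applied to the quadruple $(x,y,z,p)$.
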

\begin{proof}
Firstly, we prove that $s_p$ is a metric.  Obviously, $s_p(x,y)\geq 0$, $s_p(x,y)=s_p(y,x)$ and $s_p(x, y)=0$ if and only
if $x=y$. So it is enough to show that the triangle inequality holds. That is, for all $x, y, z\in X\setminus\{p\}$,
$$
s_p(x, y)\leq s_p(x, z)+s_p(z, y),
$$
which is equivalent to
$$
d(x,y)[1+d(z,p)]\leq d(x,z)[1+d(y,p)]+d(y,z)[1+d(x,p)].
$$
Since $(X,d)$ is a Ptolemy space, the above inequality holds naturally, which implies that $s_p$ is a metric on $X$.

Now, we show that $(X, s_p)$ also is a Ptolemy space.
For any $x_i\in X$ for $i=1,2,3,4$. Set $p_i=1+d(p,x_i)$ and $d_{ij}=d(x_i,x_j)$,
thus $s_p(x_x,x_j)=d_{ij}/p_ip_j$
for $i,j\in \{1,2,3,4\}$.
Since $(X,d)$ is a Ptolemy space, we have
$$
d_{12}d_{34}\leq d_{13}d_{24}+d_{14}d_{23},
$$
Thus
$$
\frac{d_{12}d_{34}}{p_1p_2p_3p_4}\leq \frac{d_{13}d_{24}}{p_1p_2p_3p_4}+\frac{d_{14}d_{23}}{p_1p_2p_3p_4}.
$$
That is
$$
s_p(x_1,x_2)s_p(x_3,x_4)\leq s_p(x_1,x_3)s_p(x_2,x_4)+s_p(x_1,x_4)s_p(x_2,x_3),
$$
which implies that $(X, s_p)$ also is a Ptolemy space.
\end{proof}

Using $s_p$, we define the following metric  $S_p$ on $X$
by
$$
S_p(x,y)=\log(1+s_p(x,y)).
$$
According to Theorem \ref{keytheorem}, we have the following result.
\begin{theorem}
Supppose $(X,d)$ is a Ptolemy and $p\in X$. The metric space $(X, S_p)$ is a strongly hyperbolic space with parameter $\epsilon=2$. Thus $(X, S_p)$  is a $\log2/2$-hyperbolic space.
\end{theorem}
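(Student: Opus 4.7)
The plan is to observe that this theorem is an immediate consequence of the two main results already established in the paper, so the proof amounts to an assembly argument rather than any fresh computation.

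First I would invoke the previous theorem, which shows that whenever $(X,d)$ is Ptolemy, the one-point transform $(X, s_p)$ is again a Ptolemy space (and in particular $s_p$ is itself a genuine metric). This is precisely the input required to feed into Theorem \ref{keytheorem}.

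Next, I would apply Theorem \ref{keytheorem} to the Ptolemy metric space $(X, s_p)$ in place of $(X,d)$. Since $S_p = \log(1 + s_p)$ by definition, the conclusion of Theorem \ref{keytheorem} says exactly that $(X, S_p)$ is strongly hyperbolic with parameter $\epsilon = 2$. The final sentence of the theorem, namely that $(X, S_p)$ is $(\log 2)/2$-hyperbolic, then follows by quoting Theorem \ref{NJThereom}: a strongly hyperbolic space with parameter $\epsilon$ is automatically $(\log 2)/\epsilon$-hyperbolic.

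There is no real obstacle here; the substantive work was done in the two preceding theorems. The only thing worth double-checking is that $s_p$ is defined on all of $X$ (not just on $X \setminus \{p\}$ as for $\tau_p$ and $\chi_p$), which is evident from the formula since the denominator $[1+d(x,p)][1+d(y,p)]$ never vanishes; hence the composition $\log(1 + s_p)$ is well-defined globally on $X$, and the four-point strong hyperbolicity inequality transferred from Theorem \ref{keytheorem} holds for all quadruples in $X$.
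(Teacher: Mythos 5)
Your proposal is correct and is exactly the argument the paper intends: the paper states this theorem with no proof body, prefacing it only with ``According to Theorem \ref{keytheorem}, we have the following result,'' relying on the preceding theorem that $(X,s_p)$ is Ptolemy and then applying Theorem \ref{keytheorem} to $s_p$, with the final claim coming from Theorem \ref{NJThereom}. Your assembly of these ingredients (including the observation that $S_p$ is defined on all of $X$) is precisely what the paper leaves implicit.
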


Suppose $(X,d)$ is a metric space. For each $p\in X$, A. G. Aksov, Z. Ibragimov and W. Whiting  defined a distance function $\tau_p$ on $X\setminus\{p\}$
in \cite{AZW} by
$$
\tau_p(x,y)=\log(1+2\frac{d(x,y)}{\sqrt{d(p,x)}\sqrt{d(p,y)}}).
$$
They obtained the following result.
\begin{theorem}[\cite{AZW}, Theorem 2.1 and Lemma 4.1]Let $(X, d)$ be a Ptolemy space and let $p\in X$ be an
arbitrary point. Then the distance function $\tau_p$ is a metric on $X\setminus\{p\}$. In
particular, the space $(X\setminus\{p\}, \tau_p)$ is Gromov hyperbolic with $\delta=\log 3+\log2$.
\end{theorem}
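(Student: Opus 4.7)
The plan naturally decomposes into two parts: showing that $\tau_p$ is a metric, and establishing Gromov hyperbolicity with the specific constant $\delta = \log 3 + \log 2$. I would tackle them in that order, since the metric proof produces algebraic identities that feed into the hyperbolicity check.

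For the metric part, non-negativity, symmetry, and separation are immediate, so the content is the triangle inequality. Exponentiating $\tau_p(x,y) \leq \tau_p(x,z) + \tau_p(z,y)$ and writing $u = d(p,x)$, $v = d(p,y)$, $w = d(p,z)$, the inequality is seen, after multiplying through by $w\sqrt{uv}/2$, to be equivalent to
$$ w\,d(x,y) \;\leq\; \sqrt{vw}\,d(x,z) + \sqrt{uw}\,d(z,y) + 2\,d(x,z)\,d(z,y). $$
The Ptolemy inequality gives $w\,d(x,y) \leq v\,d(x,z) + u\,d(z,y)$, so it suffices to bound the defect $(v-\sqrt{vw})\,d(x,z) + (u-\sqrt{uw})\,d(z,y)$ by $2\,d(x,z)\,d(z,y)$. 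A term of the form $v-\sqrt{vw}$ is non-positive when $v\leq w$; otherwise one writes $v-\sqrt{vw} = \sqrt{v}\,(v-w)/(\sqrt{v}+\sqrt{w}) \leq |v-w| \leq d(z,y)$, the last inequality being the triangle inequality in $(X,d)$. Hence each defect is bounded by $d(x,z)\,d(z,y)$, and summing gives the result.

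For Gromov hyperbolicity I would verify the four-point condition directly. Setting $D_{ij} := 2\,d(x_i,x_j)/\sqrt{d(p,x_i)\,d(p,x_j)}$, so that $\tau_p(x_i,x_j) = \log(1+D_{ij})$, the Ptolemy inequality for $d$ descends (the four $d(p,x_i)^{1/2}$ factors match identically on both sides) to the multiplicative Ptolemy
$$ D_{12}\,D_{34} \;\leq\; D_{13}\,D_{24} + D_{14}\,D_{23}, $$
which in particular forces the larger of the two cross-pairings to be at least $\tfrac{1}{2}D_{12}D_{34}$. Assuming WLOG that $(1+D_{12})(1+D_{34})$ is the largest of the three ``pairings'', the task is to prove
$$ (1+D_{12})(1+D_{34}) \;\leq\; 36\,\max\bigl\{(1+D_{13})(1+D_{24}),\,(1+D_{14})(1+D_{23})\bigr\}. $$
I would split into cases according to whether the dominating $D_{ij}$'s are large or small: when they are all large, the ``$1$'' terms become negligible and multiplicative Ptolemy yields a ratio $\leq 2$; when they are small, the triangle inequality for $\tau_p$ in the equivalent form $D_{ij} \leq D_{ik} + D_{kj} + D_{ik}D_{kj}$ (already established in the first part) can be used to absorb the weak pairings into the dominating one. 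Matching the two regimes produces the overall multiplicative factor $36 = e^{2\log 6}$ and thus $\delta = \log 3 + \log 2$.

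The main obstacle is isolating the sharp constant $\log 6$ in the second step rather than some merely finite hyperbolicity bound. A tempting shortcut would be to apply Theorem \ref{keytheorem} directly to $D$, since $D$ inherits the Ptolemy property from $d$, but this fails because $D$ is not itself a metric---only the weaker quasi-triangle $D_{ij} \leq D_{ik}+D_{kj}+D_{ik}D_{kj}$ holds (and this is precisely what the metric part of the present theorem asserts for $\tau_p$). Consequently the hyperbolicity of $\tau_p$ has to be established by its own four-point argument, and the specific constant $\log 3 + \log 2$ emerges only after the tightest case of that argument is handled.
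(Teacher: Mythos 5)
The paper does not actually prove this statement---it is imported verbatim from [AZW] (Theorem 2.1 and Lemma 4.1)---so there is no in-paper proof to compare against; I am judging your argument on its own. Your first half (the triangle inequality for $\tau_p$) is complete and correct: the reduction to $w\,d(x,y)\leq \sqrt{vw}\,d(x,z)+\sqrt{uw}\,d(z,y)+2\,d(x,z)d(z,y)$, the use of Ptolemy on $\{x,y,z,p\}$, and the bound $v-\sqrt{vw}\leq |v-w|\leq d(y,z)$ on each defect term are all right, and this is exactly where the factor $2$ in the definition of $\tau_p$ is needed. This part stands as a proof.

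The second half is where there is a genuine gap: the decisive inequality $(1+D_{12})(1+D_{34})\leq 36\,\max\{(1+D_{13})(1+D_{24}),(1+D_{14})(1+D_{23})\}$ is the correct target, and you have identified the correct two ingredients (multiplicative Ptolemy for the $D_{ij}$ and the exponentiated triangle inequality $1+D_{ij}\leq(1+D_{ik})(1+D_{kj})$ from part one), but the sentence ``matching the two regimes produces the overall multiplicative factor $36$'' is an assertion, not an argument---no case analysis is actually carried out and no constant is actually computed. The gap closes easily, and with no case split at all: from $1+D_{12}\leq(1+D_{13})(1+D_{23})$ and $1+D_{12}\leq(1+D_{14})(1+D_{24})$ you get $(1+D_{12})^2\leq AB$ where $A=(1+D_{13})(1+D_{24})$, $B=(1+D_{14})(1+D_{23})$, and likewise $(1+D_{34})^2\leq AB$; together with $D_{12}D_{34}\leq D_{13}D_{24}+D_{14}D_{23}\leq (A-1)+(B-1)$ this gives
\[
(1+D_{12})(1+D_{34})=1+D_{12}+D_{34}+D_{12}D_{34}\leq 2\sqrt{AB}+A+B-3=(\sqrt{A}+\sqrt{B})^2-3\leq 4\max\{A,B\}.
\]
This yields $\delta=\tfrac12\log 4=\log 2$, which is stronger than the stated $\delta=\log 3+\log 2$ and a fortiori proves the theorem. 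In particular your closing paragraph misdiagnoses the difficulty: $\log 6$ is not a sharp constant to be ``isolated'' by a tight case analysis, and the large/small dichotomy you propose is unnecessary. Had you simply expanded the product and applied your two inequalities termwise, the proof would have been complete.
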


Motivated by the definition of $\tau_p$, for each $p\in X$, we define a distance function $\chi_p$ on $X\setminus\{p\}$
by
$$
\chi_p(x,y)=\log(1+\frac{d(x,y)}{d(p,x)d(p,y)}).
$$
Usually, $\chi_p$ is not a metric on $X\setminus\{p\}$. But, when $(X,d)$ is a Ptolemy space, we have the following result.
\begin{theorem}Let $(X, d)$ be a Ptolemy metric space and let $p\in X$ be an
arbitrary point. Then the distance function $\chi_p$ is a metric on $X\setminus\{p\}$.
\end{theorem}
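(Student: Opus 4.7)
The plan is to verify the metric axioms for $\chi_p$ on $X\setminus\{p\}$. Non-negativity, symmetry, and the fact that $\chi_p(x,y)=0$ iff $x=y$ are immediate from the corresponding properties of $d$ together with $\log(1+t)\geq 0$ with equality iff $t=0$ (note that $d(p,x),d(p,y)>0$ since $x,y\neq p$). So the only real content is the triangle inequality, and for that I will exponentiate and reduce everything to a single polynomial inequality in the pairwise distances that can be obtained from Ptolemy.

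Concretely, given $x,y,z\in X\setminus\{p\}$, the inequality $\chi_p(x,y)\leq \chi_p(x,z)+\chi_p(z,y)$ is equivalent, after exponentiating, to
\[
1+\frac{d(x,y)}{d(p,x)d(p,y)}\;\leq\;\Bigl(1+\frac{d(x,z)}{d(p,x)d(p,z)}\Bigr)\Bigl(1+\frac{d(z,y)}{d(p,z)d(p,y)}\Bigr).
\]
Expanding the right-hand side and clearing the common denominator $d(p,x)d(p,y)d(p,z)^2$, what needs to be shown is
\[
d(x,y)\,d(p,z)^2 \;\leq\; d(x,z)\,d(p,y)\,d(p,z)\;+\;d(z,y)\,d(p,x)\,d(p,z)\;+\;d(x,z)\,d(z,y).
\]

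The crucial step is to apply the Ptolemy inequality to the quadruple $(x,y,p,z)$, which gives
\[
d(x,y)\,d(p,z)\;\leq\;d(x,z)\,d(p,y)+d(z,y)\,d(p,x).
\]
Multiplying both sides by $d(p,z)\geq 0$ yields
\[
d(x,y)\,d(p,z)^2\;\leq\;d(x,z)\,d(p,y)\,d(p,z)+d(z,y)\,d(p,x)\,d(p,z),
\]
which is already the desired inequality, since the remaining term $d(x,z)\,d(z,y)$ on the right is non-negative. Thus the triangle inequality holds and $\chi_p$ is a metric.

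The only subtlety worth flagging is choosing the correct labeling of the four points when invoking Ptolemy; the straightforward choice (taking the three varying points as a ``triangle'' and $p$ as the fourth vertex) produces an inequality whose left side involves $d(x,y)d(p,z)$, which after one extra multiplication by $d(p,z)$ matches the expanded form exactly. No further estimates are needed: the argument is a direct algebraic reduction of the triangle inequality for $\chi_p$ to Ptolemy's inequality for $d$.
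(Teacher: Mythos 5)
Your proof is correct and follows essentially the same route as the paper: exponentiate the triangle inequality, clear denominators, and observe that the resulting polynomial inequality is exactly Ptolemy's inequality for the quadruple $(x,y,p,z)$ plus a non-negative leftover term $d(x,z)d(z,y)$. The only cosmetic difference is that the paper divides through by $d(p,z)$ one step earlier, leaving the extra term as $d(x,z)d(y,z)/d(z,p)$.
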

\begin{proof}
Obviously, $\chi_p(x,y)\geq 0$, $\chi_p(x,y)=\chi_p(y,x)$ and $\chi_p(x, y)=0$ if and only
if $x=y$. So it is enough to show that the triangle inequality holds. That is, for all $x, y, z\in X\setminus\{p\}$,
$$
\chi_p(x, y)\leq\chi_p(x, z)+\chi_p(z, y),
$$
which is equivalent to
$$
\frac{d(x,y)}{d(x,p)d(y,p)}\leq \frac{d(x,z)}{d(x,p)d(z,p)}
+\frac{d(y,z)}{d(y,p)d(z,p)}
+\frac{d(x,z)d(y,z)}{d(z,p)^2d(x,p)d(y,p)}.
$$
That is
\begin{align}\label{eq0-metric}
d(x,y)d(z,p)\leq d(x,z)d(y,p)+d(y,z)d(x,p)+\frac{d(x,z)d(y,z)}{d(z,p)}.
\end{align}
Since $(X,d)$ is a Ptolemy space, the above inequality (\ref{eq0-metric}) holds naturally, which completes the proof.
\end{proof}

\begin{lemma}\label{lemma-2.2}
Suppose $(X,d)$ is a Ptolemy metric space and $x_i\in X$ for $i=0,1,2,3,4$.
Set $p_i=d(x_0,x_i)$ and $d_{ij}=d(x_i,x_j)$ for $i,j\in \{1,2,3,4\}$.
Then
$$
p_3p_4d_{12}+p_1p_2d_{34}\leq p_1p_3d_{24}+p_2p_4d_{13}+p_2p_3d_{14}+p_1p_4d_{23}.
$$
\end{lemma}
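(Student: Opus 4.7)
The plan is to derive the five-point inequality by applying the four-point Ptolemy inequality to the four quadruples that contain $x_0$ together with three of the four other points, and then taking a suitable linear combination. Specifically, the base-point $x_0$ will play the role of a distinguished vertex in each Ptolemy application, so that the factors $p_i = d(x_0,x_i)$ appear naturally as one of the two "crossing" distances in Ptolemy's inequality.

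First, applying Ptolemy to $\{x_0,x_1,x_2,x_3\}$ with diagonals $(x_0,x_3)$ and $(x_1,x_2)$ gives $p_3 d_{12} \leq p_1 d_{23}+p_2 d_{13}$, and applying it to $\{x_0,x_1,x_2,x_4\}$ with diagonals $(x_0,x_4)$ and $(x_1,x_2)$ gives $p_4 d_{12} \leq p_1 d_{24}+p_2 d_{14}$. Multiplying the first by $p_4$ and the second by $p_3$ and adding yields
\[
2\,p_3 p_4 d_{12} \leq p_1 p_4 d_{23}+p_2 p_4 d_{13}+p_1 p_3 d_{24}+p_2 p_3 d_{14}.
\]
Doing the symmetric thing for $d_{34}$: Ptolemy on $\{x_0,x_1,x_3,x_4\}$ gives $p_1 d_{34}\leq p_3 d_{14}+p_4 d_{13}$ and on $\{x_0,x_2,x_3,x_4\}$ gives $p_2 d_{34}\leq p_3 d_{24}+p_4 d_{23}$; multiplying by $p_2$ and $p_1$ respectively and adding yields
\[
2\,p_1 p_2 d_{34} \leq p_2 p_3 d_{14}+p_2 p_4 d_{13}+p_1 p_3 d_{24}+p_1 p_4 d_{23}.
\]

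Finally, summing the two displayed inequalities and dividing by $2$ gives exactly the claimed bound
\[
p_3 p_4 d_{12}+p_1 p_2 d_{34}\leq p_1 p_3 d_{24}+p_2 p_4 d_{13}+p_2 p_3 d_{14}+p_1 p_4 d_{23}.
\]

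There is no real obstacle here; the only thing to notice is that no single application of Ptolemy produces a four-term right-hand side matching the target, so one has to bound each of $p_3 p_4 d_{12}$ and $p_1 p_2 d_{34}$ via two Ptolemy applications and average. The symmetry between the pair $\{1,2\}$ and the pair $\{3,4\}$ dictates exactly which quadruples to pick, and the auxiliary triangle-inequality Lemma~\ref{lemma-2.1} is not needed for this statement.
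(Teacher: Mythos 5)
Your proof is correct and is essentially the paper's own argument: the four Ptolemy inequalities you use (each scaled by one $p_i$) are exactly the four inequalities the paper sums, and the only cosmetic difference is that you make the division by $2$ explicit. No issues.
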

\begin{proof}
By the Ptolemy inequality, we have
\begin{align}\label{ine-2.1.2}  \nonumber
p_3p_4d_{12}\leq p_3p_1d_{24}+p_3p_2d_{14},\\ \nonumber
p_3p_4d_{12}\leq p_4p_2d_{13}+p_1p_4d_{23},\\ \nonumber
p_1p_2d_{34}\leq p_1p_3d_{24}+p_1p_4d_{23},\\ \nonumber
p_1p_2d_{34}\leq p_2p_4d_{13}+p_2p_3d_{14}. \nonumber
\end{align}
We sum the above four inequalities and obtain that
$$
p_3p_4d_{12}+p_1p_2d_{34}\leq
p_1p_3d_{24}+p_2p_4d_{13}+p_2p_3d_{14}+p_1p_4d_{23}.
$$
\end{proof}

Using the above lemma \ref{lemma-2.2}, we obtain the following result.
\begin{theorem}Let $(X, d)$ be a Ptolemy metric space and let $p\in X$ be an
arbitrary point. Then the metric space $(X\setminus\{p\}, \chi_p)$ is strongly hyperbolic space with parameter $2$.
Thus $(X\setminus\{p\}, \chi_p)$  is $\log2/2$-hyperbolic space.
\end{theorem}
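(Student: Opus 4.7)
The plan is to unwind the definition of \emph{strongly hyperbolic with parameter $\epsilon=2$} via the four-point condition, reducing the whole statement to a polynomial inequality in the distances $p_i := d(p,x_i)$ and $d_{ij}:=d(x_i,x_j)$. Given four points $x_1,x_2,x_3,x_4 \in X \setminus \{p\}$, I observe that
$$
\exp\bigl(\chi_p(x_i,x_j)\bigr) \;=\; 1 + \frac{d_{ij}}{p_i p_j} \;=\; \frac{p_i p_j + d_{ij}}{p_i p_j},
$$
so the required four-point inequality becomes, after multiplying through by $p_1 p_2 p_3 p_4$,
$$
(p_1p_2 + d_{12})(p_3p_4 + d_{34}) \;\le\; (p_1p_3 + d_{13})(p_2p_4 + d_{24}) + (p_1p_4 + d_{14})(p_2p_3 + d_{23}).
$$

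Next I would expand both sides and cancel. One copy of $p_1p_2p_3p_4$ cancels between the left side and the right, leaving a surplus $+\,p_1p_2p_3p_4$ on the right that is harmlessly nonnegative. The remaining terms split by their ``degree'' in the $d_{ij}$: the mixed terms of the form $p_ip_j d_{kl}$ on the left must be dominated by the mixed terms on the right, and the pure product $d_{12}d_{34}$ on the left must be dominated by $d_{13}d_{24} + d_{14}d_{23}$ on the right. The first of these is exactly the content of Lemma \ref{lemma-2.2}, and the second is precisely the Ptolemy inequality for $(X,d)$. Adding the two inequalities and restoring the $p_1p_2p_3p_4$ slack gives the desired bound, so the strongly hyperbolic four-point condition holds with $\epsilon = 2$.

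The concluding assertion that $(X\setminus\{p\}, \chi_p)$ is $(\log 2)/2$-hyperbolic then follows immediately from Theorem \ref{NJThereom} applied with $\epsilon = 2$.

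The difficulty here is essentially packaging rather than genuine analysis: since Lemma \ref{lemma-2.2} already digests the hardest piece (four simultaneous applications of Ptolemy coupled to the base point $p$), the main obstacle is simply the bookkeeping of the sixteen expanded terms and the recognition that they cleanly separate into a ``linear in $d_{ij}$'' block handled by Lemma \ref{lemma-2.2} and a ``quadratic in $d_{ij}$'' block handled by Ptolemy, with the pure $p_i$-block providing nonnegative slack. Once this decomposition is spotted, the proof is a short computation.
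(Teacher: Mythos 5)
Your proposal is correct and follows essentially the same route as the paper: reduce the four-point condition with $\epsilon=2$ to the polynomial inequality obtained by clearing $p_1p_2p_3p_4$, then handle the $p_ip_jd_{kl}$ block with Lemma \ref{lemma-2.2}, the $d_{ij}d_{kl}$ block with the Ptolemy inequality, and absorb the leftover $p_1p_2p_3p_4$ as nonnegative slack. The final appeal to Theorem \ref{NJThereom} for $(\log 2)/2$-hyperbolicity is also exactly what the paper does.
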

\begin{proof}Let $x_1,x_2,x_3,x_4\in X\setminus\{p\}$, we introduce the following notations for convenience.
$d_{ij}=d(x_i,x_j)$, $p_i=d(p,x_i)$ and $\rho_{ij}=\chi_p(x_i,x_j)$ for $i,j\in \{1,2,3,4\}$. Thus
$$
\rho_{ij}=\log(1+\frac{d_{ij}}{p_ip_j})
$$
for $i,j\in \{1,2,3,4\}$.
Now, we need to show that
$$
e^{(\rho_{12}+\rho_{34})}\leq e^{(\rho_{13}+\rho_{24})}+e^{(\rho_{14}+\rho_{23})},
$$
which is equivalent to the following inequality
\begin{align}\nonumber
(1+\frac{d_{12}}{p_1p_2})(1+\frac{d_{34}}{p_3p_4})\leq (1+\frac{d_{13}}{p_1p_3})(1+\frac{d_{24}}{p_2p_4})\\     \nonumber
+(1+\frac{d_{14}}{p_1p_4})(1+\frac{d_{23}}{p_2p_3}). \nonumber
\end{align}
That is
\begin{align}
\frac{d_{12}}{p_1p_2}+\frac{d_{34}}{p_3p_4}+\frac{d_{12}}{p_1p_2}\frac{d_{34}}{p_3p_4}
&\leq\frac{d_{13}}{p_1p_3}+\frac{d_{24}}{p_2p_4}+\frac{d_{13}}{p_1p_3}\frac{d_{24}}{p_2p_4}\\ \nonumber
&+\frac{d_{14}}{p_1p_4}+\frac{d_{23}}{p_2p_3}+\frac{d_{14}}{p_1p_4}\frac{d_{23}}{p_2p_3}+1,\nonumber
\end{align}
which is equivalent to the following inequality
\begin{align}\label{strongly}
p_3p_4d_{12}+p_1p_2d_{34}+d_{12}d_{34}&\leq p_2p_4d_{13}+p_1p_3d_{24}+d_{13}d_{24}\\ \nonumber
&+p_2p_3d_{14}+p_1p_4d_{23}+d_{14}d_{23}\\ \nonumber
&+p_1p_2p_3p_4.\nonumber
\end{align}
Since $(X,d)$ is a Ptolemy space, we have
$$
d_{12}d_{34}\leq d_{13}d_{24}+d_{14}d_{23}.
$$
From Lemma \ref{lemma-2.2}, we have
$$
p_3p_4d_{12}+p_1p_2d_{34}\leq p_2p_4d_{13}+p_1p_3d_{24}+p_2p_3d_{14}+p_1p_4d_{23}.
$$
Thus, the above inequality \ref{strongly} holds, which implies that
$(X\setminus\{p\}, \chi_p)$ is a strongly space with parameter $2$.  From Theorem \ref{NJThereom}, we know that $(X\setminus\{p\}, \chi_p)$  is $\log2/2$-hyperbolic space. \end{proof}

\section{Distortion property under M\"{o}bius transformations}
In the following, we use the notation $\mathbb{R}^n,n\geq 2$ for the Euclidean-dimensional space. 
The Euclidean distance between $x, y\in \mathbb{R}^n$ is denoted by $|x-y|$. Given  $x\in\mathbb{R}^n$ and $r>0$, the open ball centered at $x$ with radius $r$ is denoted by $B^n(x,r):=\{y\in \mathbb{R}^n: |x-y|< r\}$.  Denote by $\mathbb{B}^n := B^n(0,1)$, the unit ball in $\mathbb{R}^n$. One of our objectives in this section is to study the distortion property of our metric under M\"{o}bius maps from a punctured ball onto another punctured ball. Distortion properties of the scale-invariant Cassinian metric of the unit ball under M$\mathrm{\ddot{o}}$bius maps has been studied in \cite{Ibragimov3}. Recently, in the \cite{Msahoo},   M. R. Mohapatra and S. K. Sahoo also considered the distortion of the $\widetilde{\tau}$-metric under Mobius maps of a punctured ball.

%
\begin{theorem}Let $a \in \mathbb{B}^n$ and $f: \mathbb{B}^n \setminus \{0\}\rightarrow \mathbb{B}^n \setminus\{a\} $ be a M$\ddot{o}$bius map with $f(0)=a$. Then for $x,y\in\mathbb{B}^n \setminus \{0\}$,  we have
$$
\chi_0(x,y)\leq\chi_a(f(x),f(y))\leq\chi_0(x,y)-\log(1-|a|^2).
$$
The equalities hold if and only if $a=0$.
\end{theorem}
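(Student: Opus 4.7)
The plan is to reduce the entire statement to a single classical M\"{o}bius distortion identity on $\mathbb{B}^n$, after which both inequalities become one-line algebra. First I would use the fact that every M\"{o}bius map $f$ of $\mathbb{R}^n\cup\{\infty\}$ is conformal and satisfies the identity $|f(x)-f(y)| = \sqrt{|f'(x)|\,|f'(y)|}\,|x-y|$, where $|f'|$ denotes the linear conformal distortion factor (this is immediate for reflections, dilations, and inversions, and is preserved under composition). Applying this with the pairs $(x,y)$, $(x,0)$ and $(y,0)$ and forming the quotient, the factors $\sqrt{|f'(x)|}$ and $\sqrt{|f'(y)|}$ cancel, yielding
\[
\frac{|f(x)-f(y)|}{|f(x)-a|\,|f(y)-a|} \;=\; \frac{|x-y|}{|f'(0)|\,|x|\,|y|}.
\]
Combined with the classical Schwarz--Pick type identity $|f'(0)| = 1-|f(0)|^2 = 1-|a|^2$ for M\"{o}bius self-maps of $\mathbb{B}^n$, this becomes
\[
\frac{|f(x)-f(y)|}{|f(x)-a|\,|f(y)-a|} \;=\; \frac{|x-y|}{(1-|a|^2)\,|x|\,|y|}.
\]

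Substituting this into the definition of $\chi_a$ and writing $t := |x-y|/(|x|\,|y|)$, so that $\chi_0(x,y) = \log(1+t)$, I would obtain the clean formula
\[
\chi_a(f(x),f(y)) \;=\; \log\!\Bigl(1+\tfrac{t}{1-|a|^2}\Bigr).
\]
The lower bound $\chi_0(x,y)\leq\chi_a(f(x),f(y))$ is then immediate from $1/(1-|a|^2)\geq 1$. The upper bound $\chi_a(f(x),f(y))\leq\chi_0(x,y)-\log(1-|a|^2)$ is, after exponentiating and multiplying through by $1-|a|^2$, the trivial statement $(1-|a|^2)+t\leq 1+t$, i.e.\ $|a|^2\geq 0$. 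Both comparisons are strict (for $x\neq y$) unless $|a|=0$, so the equality cases reduce to $a=0$.

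The only genuine obstacle is the distortion identity in the first paragraph. The cleanest route is probably to decompose $f$ as $T_a\circ R$, where $R\in O(n)$ and $T_a$ is the canonical M\"{o}bius transformation of $\mathbb{B}^n$ sending $0$ to $a$, and then to compute $|T_a(x)-T_a(y)|$ and $|T_a(x)-a|$ directly from an explicit formula for $T_a$ (in complex dimension one this specializes to $(z+a)/(1+\overline{a}z)$, where a short computation gives both quantities with a common denominator that cancels in the ratio). Once those two quantities are on the table, the ratio collapses to $|x-y|/\bigl((1-|a|^2)|x|\,|y|\bigr)$ without any estimation, and the remainder of the argument is elementary logarithm manipulation.
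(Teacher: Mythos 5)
Your argument is correct, and it reaches the same pivotal identity as the paper, namely
\[
\frac{|f(x)-f(y)|}{|f(x)-a|\,|f(y)-a|}=\frac{1}{1-|a|^{2}}\cdot\frac{|x-y|}{|x|\,|y|},
\]
after which the two inequalities and the equality case are the same one-line logarithm manipulations in both treatments. Where you differ is in how this identity is obtained. The paper constructs an explicit auxiliary inversion $\sigma$ in the sphere $\mathbb{S}^{n-1}(a^{*},r)$ with $a^{*}=a/|a|^{2}$ and $r^{2}=|a^{*}|^{2}-1$, uses that $\sigma\circ f$ is orthogonal to transport Euclidean distances, and then cancels the resulting factors $|f(x)-a^{*}|$, $|f(y)-a^{*}|$ by hand; this is self-contained but computation-heavy. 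You instead invoke two classical facts: the conformal distortion identity $|f(x)-f(y)|=\sqrt{|f'(x)|\,|f'(y)|}\,|x-y|$ (valid for all M\"obius maps of $\overline{\mathbb{R}^{n}}$, immediate for generators and stable under composition by the chain rule) applied to the three pairs $(x,y)$, $(x,0)$, $(y,0)$, together with the Schwarz--Pick identity $|f'(0)|=1-|a|^{2}$ coming from invariance of the hyperbolic density $2/(1-|x|^{2})$. Your route is shorter, makes transparent why all the cross terms cancel, and generalizes without modification to $\chi_{p}$ for an arbitrary base point $p$ (replacing $1-|a|^{2}$ by $|f'(p)|$); the cost is that it leans on two external lemmas, of which the distortion identity should be cited (e.g.\ Beardon) or proved via the decomposition $f=T_{a}\circ R$ you sketch, whereas the paper's proof is verifiable from the single formula for $\sigma$. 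One small point of care: the equality assertion should be read as holding for all $x,y$ (for $x=y$ the lower bound is trivially an equality for every $a$), and your parenthetical ``for $x\neq y$'' handles this correctly, arguably more carefully than the paper's ``obviously.''
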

\begin{proof}
If $a=0$, the proof is trivial since $f(x)=Ax$ for some orthogonal matrix $A$. Now we assume that $a\neq 0$. Let $\sigma$ be the inversion in the sphere
$\mathbb{S}^{n-1}(a^{*},r)=\{x\in \mathbb{R}^n: |x-a^{*}|=r\}$,  where
$$
a^{*}=\frac{a}{|a|^2},r=\sqrt{|a^{*}|^2-1}=\frac{\sqrt{1-|a|^2}}{|a|}.
$$
Note that the sphere $\mathbb{S}^{n-1}(a^{*},r)$ is orthogonal to $\mathbb{S}^{n-1}$ and that $\sigma(a)=0$. In particular, $\sigma$ is a M$\mathrm{\ddot{o}}$bius map with $\sigma(\mathbb{B}^n \setminus \{a\})= \mathbb{B}^n \setminus \{0\}$. Recall that
$$
\sigma(x)=a^{*}+\big(\frac{r}{|x-a^{*}|}\big)^2(x-a^{*}).
$$
Then $\sigma\circ f$ is an orthogonal matrix (see, for example, [\cite{Beardon}, Theorem 3.5.1(i)]). In particular,
$$
|\sigma(f(x))-\sigma(f(y))|=|x-y|.
$$
By computation, we have
$$
|\sigma(x)-\sigma(y)|=\frac{r^2|x-y|}{|x-a^{*}||y-a^{*}|}.
$$
Thus
$$
|\sigma(f(x))-\sigma(f(y))|=\frac{r^2|f(x)-f(y)|}{|f(x)-a^{*}||f(y)-a^{*}|}=|x-y|,
$$
which implies that
$$
|f(x)-f(y)|=\frac{|x-y|}{r^2}|f(x)-a^{*}||f(y)-a^{*}|.
$$

Since $f(0)=a$, we have
$$
|f(x)-a|=\frac{|f(x)-a^{*}||a-a^{*}|}{|a^{*}|^2-1}|x|\quad\text{and}\quad|f(y)-a|=\frac{|f(y)-a^{*}||a-a^{*}|}{|a^{*}|^2-1}|y|.
$$
Notice that
$$
\chi_0(x,y)=\log(1+\frac{|x-y|}{|x||y|})
$$
and
$$
\chi_a(f(x),f(y))=\log(1+\frac{|f(x)-f(y)|}{|f(x)-a||f(y)-a|}).
$$
We have
\begin{align}\nonumber
\chi_a(f(x),f(y))&=\log(1+\frac{|f(x)-f(y)|}{|f(x)-a||f(y)-a|})\\ \nonumber
&=\log(1+\frac{|x-y|}{|x||y|}\frac{|a^{*}|^2-1}{|a-a^{*}|^2})\\ \nonumber
&=\log(1+\frac{1}{1-|a|^2}\frac{|x-y|}{|x||y|}).\nonumber
\end{align}
Since $|a|<1$, we have $1\leq\frac{1}{1-|a|^2}$.  Thus
$$
1+\frac{|x-y|}{|x||y|}\leq1+\frac{1}{1-|a|^2}\frac{|x-y|}{|x||y|}\leq\frac{1}{1-|a|^2}+\frac{1}{1-|a|^2}\frac{|x-y|}{|x||y|}.
$$
So
$$
\chi_0(x,y)\leq\chi_a(f(x),f(y))\leq\chi_0(x,y)-\log(1-|a|^2).
$$
Obviously, the equalities hold if and only if $a=0$.
\end{proof}

\noindent\textbf{Acknowledgements.} This work was supported by the National Natural Science Foundation of China under grant Nos.\,11301165,11571099.

\end{document}